\newcommand{\R}{\mathbb{R}}
\newcommand{\Z}{\mathbb{Z}}
\renewcommand{\S}{\mathfrak{S}}
\DeclareMathOperator{\conv}{conv}
\DeclareMathOperator{\PF}{PF}
\DeclareMathOperator{\NC}{NC}
\renewcommand{\phi}{\varphi}
\newtheorem{theorem}{Theorem}[section]
\newtheorem{prop}[theorem]{Proposition}
\newtheorem{lemma}[theorem]{Lemma}
\theoremstyle{definition}
\newtheorem{example}[theorem]{Example}
\definecolor{wakegold}{RGB}{158, 126, 56}
\definecolor{purple_opposite}{RGB}{97, 129, 199}
\title{Exploring Parking Functions: Poset and Polytope Perspectives}
\author{Yan Liu}
\begin{document}
\maketitle

\begin{abstract}
This paper provides an exploration of parking functions, a classical combinatorial object. We present two viewpoints on their structure and properties: through poset of noncrossing partitions and polytopes.
\end{abstract}

\section{Parking Functions}\label{section 1}

\subsection{Parking functions} Imagine living on a one-way street that dead-ends with $n$ parking spots available. You and your neighbors have $n$ cars in total, and everyone has their preferred spot to park. Without reversing, does there exist a solution that everyone can park without collision? In mathematics, this real life dilemma is called the parking problem. Let's formalize this problem.
\begin{itemize}
    \item[a.] There are $n$ parking spots labeled as $1,2,\dots,n$ on a one-way street. There are $n$ cars that want to park here, where $n\in\Z^+$;
    \item[b.] Car $C_i$ is the $i$-th car to park, which has preferred parking spot $\alpha_i\in[n]=\{1,2,\dots,n\}$. More than one car can have the same preference.
    \item[c.] If the preferred spot of some car had already been occupied, the car will move forward and park in the next available spot. 
    \item[d.] No backward movement is allowed.
\end{itemize}
It is not guaranteed that every car will be able to park before driving to the dead-end. If all $n$ cars are able to park under above conditions, then we say the preference list $\alpha=(\alpha_1,\alpha_2,\dots,\alpha_n)$ is a \textit{parking function}. We use $\PF_n$ to denote the set of all parking functions of length $n$. For simplicity, sometimes we drop the parentheses and commas in the preference list. For example, all parking functions of length $3$ are:
\[\begin{matrix}
    111, & 112, & 121, & 211, & 113, & 131, & 311, & 122,\\
    212, & 221, & 123, & 132, & 213, & 231, & 312, & 321.
\end{matrix}\]

It is very natural to ask: how many parking functions are there? In 1966, Konheim and Weiss counted the number of parking functions.

\begin{theorem}[\cite{AnOccupancyKonheimWeiss}]\label{cardinality of pf}
    For a positive integer $n$, $|\PF_n|=(n+1)^{n-1}$.
\end{theorem}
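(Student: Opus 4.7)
The plan is to use Pollak's celebrated circular argument, which converts the counting problem into an orbit-counting problem via a symmetry. First I would enlarge the setup: instead of $n$ spots on a one-way street, consider $n+1$ spots arranged cyclically, still with $n$ cars whose preferences are now allowed to lie in $[n+1]$. Since the loop has no dead-end, every car eventually parks, and since there are $n+1$ spots for $n$ cars, exactly one spot is left empty at the end of the process.

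The key equivariance observation is that if a preference list $\alpha = (\alpha_1,\ldots,\alpha_n) \in [n+1]^n$ leaves spot $s$ empty on the cycle, then replacing every $\alpha_i$ by $\alpha_i + k \pmod{n+1}$ leaves spot $s+k \pmod{n+1}$ empty; this is because the entire parking dynamics are translation-invariant on the cycle. Consequently, the simultaneous shift action of $\Z/(n+1)\Z$ on $[n+1]^n$ has every orbit of size exactly $n+1$, and each orbit contains exactly one preference list for which spot $n+1$ is the unused spot.

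Next I would identify the preference lists leaving spot $n+1$ empty with $\PF_n$. If spot $n+1$ is never occupied on the cycle, then in particular no car wraps around from $n+1$ to $1$, so the cyclic parking process coincides with the linear one, and the preferences all lie in $[n]$; conversely, any parking function extends to the cycle without ever using spot $n+1$. Combining with the orbit count gives $|\PF_n| = (n+1)^n / (n+1) = (n+1)^{n-1}$.

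The main obstacle I anticipate is a rigorous proof of the cyclic equivariance step. While intuitively natural, one must verify that the full parking trajectory on the cycle is invariant under simultaneous translation of all preferences; an induction on the index of the car being parked, tracking the set of occupied spots after each step, should suffice, since translating both the current occupancy pattern and the next car's preference by $k$ just relabels the spots and therefore produces the translated final configuration. A small auxiliary point to check is that the dynamics are well-defined on the cycle, i.e., that the "forward" search for the next empty spot always terminates; this is immediate because there are more spots than cars.
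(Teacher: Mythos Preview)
Your proposal is correct and is essentially the same argument as the paper's, namely Pollak's circular argument: enlarge to $n+1$ spots on a cycle, observe that the $\Z/(n+1)\Z$ shift acts freely on $[n+1]^n$ and permutes the position of the unique empty spot, so exactly one element of each orbit corresponds to a parking function. The paper is slightly less explicit than you about justifying the equivariance and the orbit size, but the structure and key idea are identical.
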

\begin{proof}
    We add an additional spot $n+1$, and arrange the spots in a circle clockwise. We also allow $n+1$ to be chosen as a preferred spot. With this circular arrangement, all cars are be able to park because they can circle around until an available spot is found. Say the preference list in this set up is a \textit{circular parking function} of length $n$, and we use $\mathrm{CPF}_n$ to denote the set of all circular parking functions of length $n$. Since each of the $n$ cars has $n+1$ choices for its preferred spot, we have
    \[|\mathrm{CPF}_n|=(n+1)^n.\]
    Observe that any circular parking function leaves one spot empty, say the label of the empty spot is $k\in[n+1]$. We relabel the $n+1$ spots in the circle by assigning the label $0$ to the empty spot, the label $1$ to the next spot in the clockwise direction, and so on. Given a circular parking function $\gamma=(\gamma_1,\dots,\gamma_n)$, we can ``unwrap" it to a parking function $\alpha=(\alpha_1,\dots,\alpha_n)$ by shifting each preference down by $k$, i.e., we set $\alpha_i=\gamma_i-k\pmod{n+1}$ for each $i\in[n]$.

    \begin{figure}[h]
    \captionsetup{justification=centerlast} 
    \centering
    
    \begin{subfigure}[b]{0.45\linewidth}
        \centering
        \begin{tikzpicture}[scale=0.75]
            \def\radius{1.5}
            \def\sepLength{0.2} 
            
            \begin{scope}[shift={(0, 0)}]
                \draw (0,0) circle(\radius);
                
                \foreach \i/\j/\k in {1/0/1, 2/288/5, 3/216/4, 4/144/3, 5/72/2} {
                    \node[font=\large] at (\j:\radius - 0.5) {\i};
                    
                    \draw[thick] (\j - 36:\radius) -- (\j - 36:\radius + \sepLength);
                    
                    \ifnum\i=1 \node[font=\large] at (\j:\radius + 0.5) {$C_3$}; \fi
                    \ifnum\i=5 \node[font=\large] at (\j:\radius + 0.5) {$C_2$}; \fi
                    \ifnum\i=4 \node[font=\large] at (\j:\radius + 0.5) {$C_1$}; \fi
                    \ifnum\i=3 \node[font=\large] at (\j:\radius + 0.5) {$C_4$}; \fi
                }
            \end{scope}
        \end{tikzpicture}
        \caption{$\gamma=(4,4,1,3)$}
        \label{fig:alpha}
    \end{subfigure}
    \hfill
    \begin{subfigure}[b]{0.45\linewidth}
        \centering
        \begin{tikzpicture}[scale=0.75]
            \def\radius{1.5}
            \def\sepLength{0.2} 
            
            \begin{scope}[shift={(0, 0)}]
                \draw (0,0) circle(\radius);
                
                \foreach \i/\j/\k in {4/0/1, 0/288/5, 1/216/4, 2/144/3, 3/72/2} {
                    \node[font=\large, text=wakegold] at (\j:\radius - 0.5) {\i};
                    
                    \draw[thick] (\j - 36:\radius) -- (\j - 36:\radius + \sepLength);
                    
                    \ifnum\i=4 \node[font=\large] at (\j:\radius + 0.5) {$C_3$}; \fi
                    \ifnum\i=3 \node[font=\large] at (\j:\radius + 0.5) {$C_2$}; \fi
                    \ifnum\i=2 \node[font=\large] at (\j:\radius + 0.5) {$C_1$}; \fi
                    \ifnum\i=1 \node[font=\large] at (\j:\radius + 0.5) {$C_4$}; \fi
                }
            \end{scope}
        \end{tikzpicture}
        \caption{$\alpha=(2,2,4,1)$}
        \label{fig 1}
    \end{subfigure}

    \caption{Circular parking function $\gamma$ to parking function $\alpha$.}
    \end{figure}
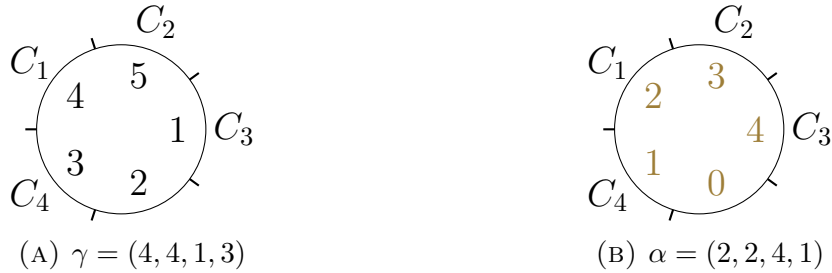

    We say two circular parking functions $\gamma_1,\gamma_2\in\text{CPF}_n$ are \textit{equivalent} if they are related by a circular rotation, i.e.,
    \[\gamma_1\equiv\gamma_2-k\pmod{n+1}\]
    for $k\in[n+1]$. This relation defines an equivalence class, and each equivalence class contains $n+1$ circular parking functions, one for each possible rotation. Since all circular parking functions in the same equivalence class unwrap to the same parking function, we conclude
    \[|\PF_n|=\frac{|\text{CPF}_n|}{n+1}=\frac{(n+1)^n}{n+1}=(n+1)^{n-1}.\qedhere\]
\end{proof}
This elegant idea of adding an extra parking spot and arranging the spots into a circle was by Pollak (see \cite{riordan1969ballots}). 

Parking functions have many nice properties. For example, observe that among the parking functions of length $3$ listed above, some are rearrangements of each other. In general, we have below lemma.
\begin{lemma}\label{parking function rearrangement}
    Every permutation of the entries of a parking function is also a parking function.
\end{lemma}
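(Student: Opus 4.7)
The plan is to prove the following permutation-invariant characterization of parking functions, from which the lemma follows immediately: a sequence $\alpha=(\alpha_1,\dots,\alpha_n)\in[n]^n$ lies in $\PF_n$ if and only if
\[\lab\{i\in[n]:\alpha_i\le k\}\rab\ge k\qquad\text{for every }k\in[n].\]
The right-hand side depends only on the multiset of entries of $\alpha$, so once I establish the characterization, permuting the entries of a parking function yields a parking function.

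For the forward direction, I would note that if $\alpha\in\PF_n$ then spots $1,2,\dots,k$ are all occupied at the end of the parking process. Since a car that parks in spot $s$ must have had preference at most $s$, each of the $k$ cars occupying spots $1,\dots,k$ has preference at most $k$, giving $|\{i:\alpha_i\le k\}|\ge k$.

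For the backward direction, my approach is to prove the contrapositive: if some car fails to park under $\alpha$, then at least one spot is empty at the end of the process, and I would pick such an empty spot $s$. The key observation is that spots, once occupied, are never vacated, so $s$ is empty throughout the process. It follows that any car $C_i$ with $\alpha_i\le s$ that scanned through spot $s$ would necessarily have parked there; hence every car with preference at most $s$ parks in some spot of $\{1,\dots,s-1\}$. Since there are only $s-1$ such spots, $|\{i:\alpha_i\le s\}|\le s-1$, violating the inequality at $k=s$.

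The main subtlety lies in the backward direction: one needs the irreversibility of parking to translate ``empty at the end of the process'' into ``empty at every moment of the process.'' Once this monotonicity is noted, both directions reduce to straightforward counting, and I do not expect any serious technical obstacle.
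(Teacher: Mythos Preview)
Your proof is correct and takes a genuinely different route from the paper's. The paper argues directly: since $\S_n$ is generated by adjacent transpositions, it suffices to show that swapping two adjacent preferences $\alpha_i,\alpha_{i+1}$ preserves the parking property, and this is done by a case analysis tracking where the two affected cars end up under the swapped list. You instead establish the permutation-invariant characterization $\lab\{i:\alpha_i\le k\}\rab\ge k$ for all $k$, from which the lemma is immediate. The paper in fact proves essentially this same characterization afterward as Proposition~\ref{parking function criterion} (phrased via the weakly increasing rearrangement), but \emph{uses} the present lemma in that proof; your argument reverses the logical dependency and obtains both results at once with no case analysis. The paper's approach has the modest virtue of staying entirely at the level of the parking process, while yours is shorter and delivers the structural criterion directly.
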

\begin{proof}
    Suppose $\alpha=(\alpha_1,\dots,\alpha_n)$ is a parking function of length $n$. For any permutation $\sigma\in\S_n$, we aim to show that $\sigma(\alpha)$
    is also a parking function. Since the symmetric group $\S_n$ is generated by adjacent transpositions, it suffices to show that
    \[\alpha'=\sigma(\alpha)=(\alpha_1,\dots,\alpha_{i+1},\alpha_{i},\dots,\alpha_n)\]
    is also a parking function, where $\sigma=(i~i+1)$, $i\in[n-1]$.

    Let $C_i$ and $C_{i+1}$ denote the cars whose preferences are $\alpha_i$ and $\alpha_{i+1}$, respectively, in the preference list $\alpha$. The new preference list $\alpha'$ modifies the order in which cars park. $C_i$ becomes the $(i+1)$-th car and desires $\alpha_{i+1}'$ in $\alpha'$ (which is $\alpha_i$ in $\alpha$). To avoid ambiguity, we denote $C_i$ as $C_{i+1}'$ under the list $\alpha'$. Similarly, $C_{i+1}$ becomes the $i$-th car in $\alpha'$, desiring spot $\alpha_i'$ in $\alpha'$ (which is $\alpha_{i+1}$ in $\alpha$), and is denoted as $C_i'$ under $\alpha'$.

    Assume that under the original preference list $\alpha$, cars $C_i$ and $C_{i+1}$ park at spots $j$ and $k$, respectively, with $j,k\in[n]$. Consequently, we have $\alpha_i\le j$ and $\alpha_{i+1}\le k$. We consider three cases: $\alpha_i=\alpha_{i+1}$, $\alpha_i<\alpha_{i+1}$, and $\alpha_i>\alpha_{i+1}$. In the first case, we have $\alpha=\alpha'$, so $\alpha'$ is a parking function trivially. In the rest of this proof, we will focus on the second case, where $\alpha_i<\alpha_{i+1}$, and show that $\alpha'$ is a parking function. The third case follows by a similar argument and is left to the reader as an exercise.

    If $\alpha_i<\alpha_{i+1}$, if follows that $j<k$. When $C_i'$ is about to park, spot $j$ and $k$ are still available. If $j<\alpha_{i+1}$, $C_i'$ initially attempts to park at its preferred spot $\alpha_{i}'=\alpha_{i+1}$, which exceeds $j$. The next available spot after $\alpha_{i}'$ is $k$ (which may coincide with $\alpha_i'$), and $C_i'$ parks at $k$. Next, when $C_{i+1}'$ is ready to park, it prefers the spot $\alpha_{i+1}'=\alpha_i$. Since $j$ is still available, $C_{i+1}'$ parks at $j$, which is either its preferred spot or the next available spot after $\alpha_{i+1}'$. See \Cref{permutation of a pf is also a pf}. Similarly, if $\alpha_{i+1}\le j$, then $C'_{i}$ parks at $j$ and $C'_{i+1}$ parks at $k$.

    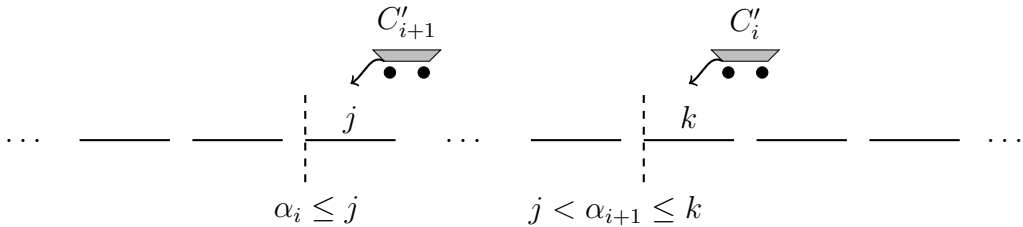
\begin{figure}[h]
    \centering
    \begin{tikzpicture}[scale=1.5]
        \foreach \x in {1,...,3} {
            \draw[thick] (\x, 0) -- (\x+0.8, 0); 
        }
            \node at (4.4, 0) {\(\dots\)};
        \foreach \x in {5,...,8} {
            \draw[thick] (\x, 0) -- (\x+0.8, 0); 
        }
    
        \draw[dashed, thick] (3, 0.4) -- (3, -0.4);
        \node[below] at (3.4, 0.4) {\(j\)};
        \draw[dashed, thick] (6, 0.4) -- (6, -0.4);
        \node[below] at (6.4, 0.4) {\(k\)};
    
        \node at (0.5, 0) {\(\dots\)};
        \node at (9.2, 0) {\(\dots\)};
    
        \node[below] at (3.1, -0.4) {\(\alpha_i \leq j\)};
        \node[below] at (5.75, -0.4) {\(j<\alpha_{i+1} \leq k\)};
    
        \draw[fill=gray!50] (3.7, 0.7) -- (4.1, 0.7) -- (4.2, 0.8) -- (3.6, 0.8) -- cycle; 
        \draw[fill=black] (3.75, 0.6) circle (0.05); 
        \draw[fill=black] (4.05, 0.6) circle (0.05); 
        \node[above] at (3.9, 0.8) {\(C_{i+1}'\)}; 
        \draw[->, thick] (3.7, 0.7) to[out=150, in=30] (3.4, 0.5);
    
        \draw[fill=gray!50] (6.7, 0.7) -- (7.1, 0.7) -- (7.2, 0.8) -- (6.6, 0.8) -- cycle; 
        \draw[fill=black] (6.75, 0.6) circle (0.05); 
        \draw[fill=black] (7.05, 0.6) circle (0.05); 
        \node[above] at (6.9, 0.8) {\(C_i'\)}; 
        \draw[->, thick] (6.7, 0.7) to[out=150, in=30] (6.4, 0.5);
    \end{tikzpicture}
    \caption{$C_{i+1}'$ parks at $j$ and $C_{i}'$ at $k$ under $\alpha'$, if $\alpha_i<\alpha_{i+1}$.}
    \label{permutation of a pf is also a pf}
    \end{figure}
    
    Under the preference list $\alpha'$, the remaining $n-2$ cars park in the same spots as in $\alpha$. Since both $C_i'$ and $C_{i+1}'$ successfully park in all three cases, we conclude that $\alpha'$ is a parking function.
\end{proof}

The following proposition provides a simple criterion for determining whether a finite sequence is a parking function; and some people adopt it as the definition of a parking function.

\begin{prop}\label{parking function criterion}
    Let $\alpha=(\alpha_1,\dots,\alpha_n)$ be a finite sequence of positive integers. Let $\beta_1\le\beta_2\le \cdots \le\beta_n$ be the weakly increasing rearrangement of $\alpha$. Then $\alpha$ is a parking function if and only if $\beta_i\le i$ for all $i\in[n]$. 
\end{prop}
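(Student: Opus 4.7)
The plan is to use Lemma \ref{parking function rearrangement} to reduce the problem to the sorted sequence. Since every permutation of a parking function is a parking function, $\alpha$ is a parking function if and only if its weakly increasing rearrangement $\beta = (\beta_1, \dots, \beta_n)$ is one. So both implications can be proved for $\beta$ itself, which lets me exploit the fact that the cars attempt to park in weakly increasing preference order.

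For the direction ``$\beta_i \le i$ for all $i$ implies $\beta$ is a parking function,'' I would induct on $i$ to prove the stronger statement that, under the sorted preference list $\beta$, car $i$ parks exactly at spot $i$. The base case $i=1$ is immediate since $\beta_1 \le 1$ forces $\beta_1 = 1$. For the inductive step, assume cars $1, \dots, i-1$ occupy spots $1, 2, \dots, i-1$. Then car $i$, with preferred spot $\beta_i \le i$, finds spots $\beta_i, \beta_i+1, \dots, i-1$ all occupied and parks at spot $i$. This shows every car parks successfully, so $\beta$ is a parking function.

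For the converse, I would argue by contrapositive: suppose $\beta_k > k$ for some $k \in [n]$. Since $\beta$ is weakly increasing, this means $\beta_k, \beta_{k+1}, \dots, \beta_n$ are all at least $k+1$, so at least $n - k + 1$ entries of $\beta$ exceed $k$. The cars corresponding to these entries can only park in the $n - k$ spots $\{k+1, \dots, n\}$ (since no car ever moves backward from its preferred spot). By the pigeonhole principle at least one such car fails to park, contradicting that $\beta$ is a parking function.

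I do not expect a real obstacle here; the only subtle point is to be careful in the inductive step that ``cars $1, \dots, i-1$ occupy spots $1, \dots, i-1$'' genuinely follows from ``each of them parks at its own index,'' and to note explicitly in the converse direction that the invariant ``no backward movement'' is what rules out cars with preference $> k$ ever occupying a spot $\le k$. Once these are stated cleanly, the two implications together give the equivalence.
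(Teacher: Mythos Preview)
Your proposal is correct and follows essentially the same approach as the paper: both use Lemma~\ref{parking function rearrangement} to reduce the ``if'' direction to the sorted sequence $\beta$, and both handle the ``only if'' direction by a counting argument showing that $\beta_k>k$ forces too many cars into too few high-numbered spots. Your presentation is slightly more explicit---you spell out the induction that car $i$ parks at spot $i$ and phrase the converse cleanly via pigeonhole---whereas the paper calls the forward direction ``trivial'' and argues the converse directly on $\alpha$ via an empty-spot argument, but the underlying ideas are the same.
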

\begin{proof}
    Suppose that $\beta_j>j$ for some $j\in[n]$, and the leftmost occurrence of $\beta_j$ in $\alpha$ is at position $i$, i.e., $\alpha_i=\beta_j$, and car $C_i$ desires $\beta_j$. Since $\beta_j>j$, after cars $C_1,\dots,C_{i-1}$ have parked, there remains an empty spot before position $\beta_j$. For cars $C_{i+1},\dots,C_n$, they first drive to their preferred spot which is after $\beta_j$, and go forward if it has already been occupied. Therefore, no car will park at the empty spot before $j$, i.e., $\alpha$ is not a parking function.

    Conversely, suppose that $\beta_i\le i$ for all $i\in[n]$. It is trivial to see that $\beta$ is a parking function. By \Cref{parking function rearrangement}, we have $\alpha$ is a parking function.
\end{proof}

\subsection{Primitive parking functions}

We define a parking function to be \textit{primitive} if it arranges in the weakly increasing order. For example, $(1,2,3,3)$ is a primitive parking function, whereas $(3,2,1,3)$ is a parking function that is not primitive. To determine the number of primitive parking functions of length $n$, we utilize the bijection between primitive parking functions and Dyck paths. A \textit{Dyck path} of length $n$ is a path from $(0,0)$ to $(n,n)$ consisting of \textit{east steps} $(1,0)$ and \textit{north steps} $(0,1)$, such that the path never goes below the diagonal $y=x$.

\begin{theorem}
    The number of Dyck path of order $n$ is given by the Catalan number
    \[C_n=\frac{1}{n+1}{2n\choose n}.\]
\end{theorem}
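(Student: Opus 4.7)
The plan is to prove this via André's reflection principle. The total number of unrestricted lattice paths from $(0,0)$ to $(n,n)$ using east and north unit steps is $\binom{2n}{n}$, so I would count the Dyck paths by subtracting the ``bad'' paths --- those that cross strictly below the diagonal $y = x$, equivalently those that touch the line $y = x - 1$ at some lattice point.

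The key step is to set up a bijection between bad paths and all lattice paths from $(1,-1)$ to $(n,n)$. Given a bad path, let $P$ be the first point at which it meets $y = x - 1$. I would reflect the initial portion of the path (from $(0,0)$ to $P$) across the line $y = x - 1$; since this reflection sends $(a,b)$ to $(b+1,a-1)$, it interchanges east and north steps and moves the starting point $(0,0)$ to $(1,-1)$, while fixing $P$. The result is a lattice path from $(1,-1)$ to $(n,n)$. Conversely, any such path must cross the line $y = x - 1$ (it starts below and ends above), so reflecting its initial segment up to the first crossing recovers a bad path from $(0,0)$ to $(n,n)$. These two operations are mutually inverse. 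Counting the target set, a path from $(1,-1)$ to $(n,n)$ uses $n-1$ east and $n+1$ north steps, so there are $\binom{2n}{n-1}$ such paths.

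Putting this together, the number of Dyck paths of order $n$ is
\[
\binom{2n}{n} - \binom{2n}{n-1} = \binom{2n}{n}\left(1 - \frac{n}{n+1}\right) = \frac{1}{n+1}\binom{2n}{n},
\]
which is $C_n$, as claimed. The main point requiring care is the reflection step itself: one must reflect across $y = x - 1$ rather than $y = x$ and verify that the ``first touch'' point is unambiguous, so that the map and its inverse are genuinely well-defined and mutually inverse. Once the bijection is established, the binomial simplification is routine.
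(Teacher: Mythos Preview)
Your proof is correct and uses essentially the same approach as the paper: both count all $\binom{2n}{n}$ lattice paths and subtract the bad ones via a reflection bijection at the first touch of $y=x-1$. The only cosmetic difference is that you reflect the \emph{initial} segment of the bad path (sending the start to $(1,-1)$ and yielding $\binom{2n}{n-1}$ bad paths), whereas the paper reflects the \emph{terminal} segment (sending the endpoint to $(n+1,n-1)$ and yielding $\binom{2n}{n+1}$); since $\binom{2n}{n-1}=\binom{2n}{n+1}$, the two computations agree.
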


\begin{proof}
    Consider the set of paths from $(0,0)$ to $(n,n)$ consisting of east steps and north steps (with no restrictions). To get such a path, we must take $n$ steps east and $n$ steps north, so there are $\binom{2n}{n}$ paths from $(0,0)$ to $(n,n)$. We say a path from $(0,0)$ to $(n,n)$ is \textit{bad} if it crosses below the diagonal $y=x$ at any point. Let $B_n$ denote the set of all bad paths from $(0,0)$ to $(n,n)$. Hence, we want to show that
    \[C_n=\binom{2n}{n}-|B_n|.\]

    Now, consider any bad path and let $P=(i+1, i)$ be the first point where it crosses below $y=x$, where $1 \le i \le n-1$. See \Cref{fig 2a}. From $P$, we need $n-i$ east steps and $n-(i+1)$ north steps to reach $(n, n)$. If, instead, we \textit{reflect} this portion and take $n-i$ north steps and $n-(i+1)$ east steps from $P$, we will arrive at $(n+1, n-1)$ instead of $(n, n)$. See \Cref{fig 2b}. Conversely, for any path from $(0,0)$ to $(n+1,n-1)$, it must contain a point strictly below the diagonal. Let $P=(i+1,i)$ be such a point where $i$ is the smallest possible choice, where $1\le i \le n-1$. From $P$, if we reflect and take $n-i$ steps east and $n-(i+1)$ steps north, the path will end at $(n,n)$. Therefore, we conclude that there is a bijection between all paths from $(0,0)$ to $(n+1,n-1)$ and bad paths from $(0,0)$ to $(n,n)$.
    
    \begin{figure}[h]
    \captionsetup{justification=centerlast} 
    \centering
    
    \begin{subfigure}[b]{0.45\linewidth}
        \centering
        \begin{tikzpicture}[scale=0.75]
            \draw[step=1cm, gray] (0,0) grid (5,4);
            \draw[thick, gray] (0,0) -- (4,4);
            \draw[thick, dashed, gray] (1,0) -- (5,4);
        
            \draw[line width=1.5mm, wakegold] 
                (0,0) -- (0,2) -- (3,2) -- (3,4) -- (4,4);
            \fill[wakegold] (3,2) circle (1.5mm); 
            
            \node[below left] at (0,0) {\scriptsize $(0,0)$};
            \node[above] at (4,4) {\scriptsize $(4,4)$};
            \node[below right] at (3,2) {\scriptsize $P=(3,2)$};
        \end{tikzpicture}
        \caption{a bad path in the $4\times 4$ grid}
        \label{fig 2a}
    \end{subfigure}
    \hfill
    \begin{subfigure}[b]{0.45\linewidth}
        \centering
        \begin{tikzpicture}[scale=0.75]
            \draw[step=1cm, gray] (0,0) grid (5,4);
            \draw[thick, gray] (0,0) -- (4,4);
            \draw[thick, dashed, gray] (1,0) -- (5,4);
            \draw[line width=1.5mm, wakegold] 
                (0,0) -- (0,2) -- (5,2) -- (5,3);
            \fill[wakegold] (3,2) circle (1.5mm); 
            
            \node[below left] at (0,0) {\scriptsize (0,0)};
            \node[above right] at (5,3) {\scriptsize (5,3)};
            \node[below right] at (3,2) {\scriptsize $P=(3,2)$};
        \end{tikzpicture}
        \caption{the reflected path}
        \label{fig 2b}
    \end{subfigure}

    \caption{Path (A) maps to (B) under the bijection, vice versa.}
    \end{figure}
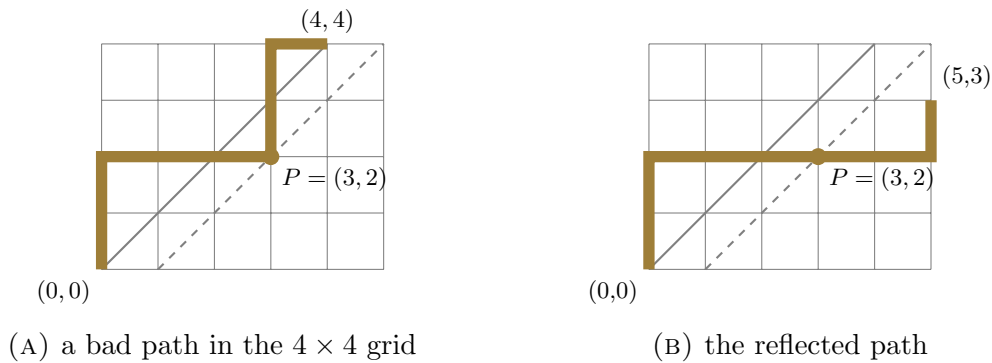

    From $(0,0)$ to $(n+1,n-1)$, there are $\binom{n+1 + (n-1)}{n+1}=\binom{2n}{n+1}$ distinct paths. Hence, we have
    \[C_n={2n\choose n}-{2n\choose{n+1}}=\frac{1}{n+1}{2n\choose n}.\qedhere\]
\end{proof}

The Catalan numbers count a wide range of combinatorial objects. Another example counted by the Catalan numbers appears in \Cref{prop: noncrossing partition and Catalan}. For a more comprehensive overview of sets counted by the Catalan numbers, see \cite{stanley_catalan}.

Dyck paths can be \textit{labeled}. Let $\alpha=(\alpha_1,\dots,\alpha_n)$ be a parking function and $\beta=(\beta_1,\dots,\beta_n)$ be its weakly increasing rearrangement. For each $i\in[n]$, let $k_i$ denote the number of occurrences of $i$ in $\beta$. Starting at $(0,0)$, move $k_i$ steps north for each $i$. Each time after completing the northward steps, move one step east. Note that this creates a Dyck path of length $n$. To label this Dyck path, let $j_1,j_2,\dots$ denote the positions in $\alpha$ where $i$ appears. Assign the labels $j_1, j_2, \dots$ to the vertical steps of the Dyck path in column $i$, ordering them from the bottom to the top.
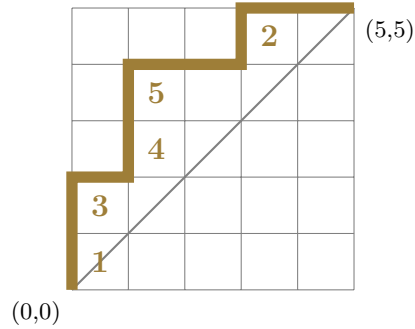
\begin{figure}[h]
    \centering
    \begin{tikzpicture}[scale=0.75]
        \draw[step=1cm, gray] (0,0) grid (5,5);
        \draw[thick, gray] (0,0) -- (5,5);
        
        \draw[line width=1.5mm, wakegold] 
            (0,0) -- (0,2) -- (1,2) -- (1,4) -- (2,4) -- (3,4) -- (3,5) -- (4,5) -- (5,5);
        
        \node[below left] at (0,0) {\scriptsize (0,0)};
        \node[below right] at (5,5) {\scriptsize (5,5)};
        
        \node[wakegold, font=\bfseries, align=center] at (0.5,0.5) {1}; 
        \node[wakegold, font=\bfseries, align=center] at (0.5,1.5) {3};
        \node[wakegold, font=\bfseries, align=center] at (1.5,2.5) {4};
        \node[wakegold, font=\bfseries, align=center] at (1.5,3.5) {5};
        \node[wakegold, font=\bfseries, align=center] at (3.5,4.5) {2};
    \end{tikzpicture}
    \caption{The labeled Dyck path of $\alpha=(1,4,1,2,2)$.}
\end{figure}

Conversely, given a labeled Dyck path, we can reconstruct the parking function $\alpha = (\alpha_1, \dots, \alpha_n)$ by assigning to each position $j$ in $\alpha$ the label $i$ if $j$ appears in column $i$ of the labeled Dyck path.

For each $i\in[n]$, the Dyck path staying above the diagonal after the vertical steps in column $i$, ensures that there are at least $i$ entries in $\alpha$ such that they are smaller or equal than $i$. Therefore, if we sort $\alpha$ into $\beta$, we have $\beta_i\le i$. Following the same reasoning, given a parking function, we can always get a Dyck path. Moreover, different labelings of Dyck path serve as an analogy to different permutations of $\alpha$. Therefore, we have a bijection between
\begin{enumerate}
    \item[a.] Dyck paths of length $n$ and parking functions of length $n$; and
    \item[b.] Labeled Dyck paths of length $n$ and primitive parking functions of length $n$.
\end{enumerate}

\section{Parking Functions and Posets}\label{section 2}
Parking functions possess a rich combinatorial structure of posets through their relationship with noncrossing partitions. In 1997, Richard P. Stanley showed that the number of maximal chains of noncrossing partitions of $[n+1]$ coincides with the number of parking functions of length $n$ providing a bijective correspondence. The primary objective of this section is to explore this bijection.

We first introduce several key definitions. A \textit{partition of a finite set} $S$ is a collection $\{B_1,B_2,\dots,B_k\}$ of nonempty pairwise disjoint subset $B_i\subseteq S$ such that $B_1\cup B_2\cup\dots\cup B_k=S$, where $i\in[k]$. A \textit{noncrossing partition} of $[n]$ is a partition $\{B_1,B_2,\dots,B_k\}$ of $[n]$ such for $a<b<c<d$, where $a,c\in B_i$, and $b,d\in B_j$, we have $i=j$. We use $\NC_n$ to denote the set of all noncrossing partitions of $[n]$, and it is a Catalan object.

\begin{theorem}[\cite{olds19494277}]\label{prop: noncrossing partition and Catalan}
    The number of noncrossing partitions of $[n]$ is the $n$-th Catalan number.
\end{theorem}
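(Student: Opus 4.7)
The plan is to prove the theorem by induction, showing that $C_n^* := |\NC_n|$ satisfies the Catalan recurrence
\[
    C_n^* = \sum_{k=0}^{n-1} C_k^* \, C_{n-1-k}^*,
\]
with base case $C_0^* = 1$. Since the Catalan numbers are uniquely determined by this recurrence together with $C_0 = 1$ (and the closed form $\frac{1}{n+1}\binom{2n}{n}$ has already been established), the identification $C_n^* = C_n$ will follow by induction on $n$. I prefer this approach over a direct bijection with Dyck paths because the recursive self-similarity of noncrossing partitions under restriction to intervals is essentially built in, whereas a clean Dyck-path bijection would require more setup.

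The crux is a decomposition of an arbitrary $\pi \in \NC_n$ based on the block containing the element $1$. If $\{1\}$ is a singleton block, then restricting $\pi$ to $\{2, \dots, n\}$ yields an arbitrary noncrossing partition of an $(n-1)$-element set, contributing $C_{n-1}^*$ possibilities. Otherwise, let $k \in \{2, \dots, n\}$ be the smallest element of the block of $1$ other than $1$ itself. The key observation is that the noncrossing condition forces a clean split at $k$: no block of $\pi$ may contain elements from both $\{2, \dots, k-1\}$ and $\{k+1, \dots, n\}$, since such a block, together with the block containing $1$ and $k$, would produce a pattern $1 < x < k < y$ with $\{1, k\}$ and $\{x, y\}$ in distinct blocks, violating the noncrossing condition. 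Consequently $\pi$ decomposes uniquely as a pair $(\sigma, \tau)$ where $\sigma$ is a noncrossing partition of $\{2, \dots, k-1\}$ and $\tau$ is a noncrossing partition of $\{k, \dots, n\}$, with the element $1$ understood as being re-attached to whichever block of $\tau$ contains $k$. Summing over $k$ contributes $\sum_{k=2}^{n} C_{k-2}^*\, C_{n-k+1}^*$.

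Combining both cases and re-indexing with $j = k - 2$ gives
\[
    C_n^* \;=\; C_{n-1}^* + \sum_{j=0}^{n-2} C_j^*\, C_{n-1-j}^* \;=\; \sum_{j=0}^{n-1} C_j^*\, C_{n-1-j}^*,
\]
which is the Catalan recurrence, completing the induction. The main obstacle is verifying the decomposition in both directions: one must check that every reconstructed pair $(\sigma, \tau)$ yields a genuinely noncrossing partition, i.e.\ that no new crossings are introduced when $1$ is appended to the block of $k$. This reduces to a short case analysis showing that a putative crossing must involve two elements from $\{1\} \cup B_k$ straddling a block contained in $\{k+1,\dots,n\} \setminus B_k$, which would already be a crossing in $\tau$. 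Everything else is bookkeeping with indices.
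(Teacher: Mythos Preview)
Your argument is correct. The decomposition by the second-smallest element $k$ of the block containing $1$ is the standard way to exhibit the Catalan recurrence for $\NC_n$, and you have handled both directions of the bijection: the noncrossing condition forces every block other than the block of $1$ to lie entirely inside $\{2,\dots,k-1\}$ or entirely inside $\{k+1,\dots,n\}$, and conversely re-attaching $1$ to the block of $k$ cannot create a crossing, by the case analysis you sketch. The index bookkeeping checks out, and the singleton case $\{1\}$ supplies exactly the missing $j=n-1$ term.

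As for comparison with the paper: there is nothing to compare. The paper states this theorem with a citation to \cite{olds19494277} and gives no proof of its own; it only proves the Dyck-path enumeration earlier and remarks that noncrossing partitions are another Catalan object. Your recurrence argument is therefore strictly more than what the paper offers. If you wanted to tie it more closely to the surrounding exposition, you could alternatively give a direct bijection from $\NC_n$ to Dyck paths (e.g.\ via the sequence recording, for each $i$, whether $i$ opens a new block or not), since the paper has already counted those; but the recurrence proof you wrote is self-contained and entirely adequate.
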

\begin{figure}[h]
    \centering
    \begin{tikzpicture}
        \newcommand{\circleRadius}{1} 
        \newcommand{\dotRadius}{0.8}  

        \begin{scope}[shift={(0, 0)}]
            \draw circle (\circleRadius);
            \foreach \y in {1,...,11} {
                \fill (90 - \y*360/11:\dotRadius) circle (0.04);
            }
            \fill[opacity=1, wakegold] 
                (90:\dotRadius) -- (58.18:\dotRadius) -- (26.36:\dotRadius) -- (-5.45:\dotRadius) -- cycle; 
            \draw[opacity=1, wakegold, line width=3pt] 
                (90 - 7*360/11:\dotRadius) -- (90 - 10*360/11:\dotRadius); 
        \end{scope}

        \begin{scope}[shift={(2.4, 0)}]
            \draw circle (\circleRadius);
            \foreach \y in {1,...,11} {
                \fill (90 - \y*360/11:\dotRadius) circle (0.04);
            }
            \fill[opacity=1, wakegold] 
                (90:\dotRadius) -- (58.18:\dotRadius) -- (26.36:\dotRadius) -- (90 - 6*360/11:\dotRadius) -- cycle; 
        \end{scope}

        \begin{scope}[shift={(2*2.4, 0)}]
            \draw circle (\circleRadius);
            \foreach \y in {1,...,11} {
                \fill (90 - \y*360/11:\dotRadius) circle (0.04);
            }
            \fill[opacity=1, wakegold] 
                (90:\dotRadius) -- (58.18:\dotRadius) -- (26.36:\dotRadius) -- (90 - 3*360/11:\dotRadius) -- (90 - 4*360/11:\dotRadius) -- (90 - 5*360/11:\dotRadius) -- (90 - 6*360/11:\dotRadius) -- (90 - 7*360/11:\dotRadius) -- (90 - 8*360/11:\dotRadius) -- cycle; 
            \draw[opacity=1, wakegold, line width=3pt] 
                (90 - 9*360/11:\dotRadius) -- (90 - 10*360/11:\dotRadius); 
        \end{scope}
        
        \begin{scope}[shift={(3*2.4, 0)}]
            \draw circle (\circleRadius);
            \foreach \y in {1,...,11} {
                \fill (90 - \y*360/11:\dotRadius) circle (0.04);
            }
            \fill[opacity=.5, purple_opposite] 
                (90:\dotRadius) -- (58.18:\dotRadius) -- (26.36:\dotRadius) -- (90 - 6*360/11:\dotRadius) -- cycle; 
            \draw[opacity=.5, purple_opposite, line width=3pt] 
                (90 - 3*360/11:\dotRadius) -- (90 - 8*360/11:\dotRadius); 
        \end{scope}

        \begin{scope}[shift={(4*2.4, 0)}]
            \draw circle (\circleRadius);
            \foreach \y in {1,...,11} {
                \fill (90 - \y*360/11:\dotRadius) circle (0.04);
            }
            \fill[opacity=.5, purple_opposite] 
                (90 - 3*360/11:\dotRadius) -- (90 - 8*360/11:\dotRadius) -- (90 - 9*360/11:\dotRadius) -- cycle; 
            \draw[opacity=.5, purple_opposite, line width=3pt] 
                (90 - 1*360/11:\dotRadius) -- (90 - 6*360/11:\dotRadius); 
        \end{scope}

        \begin{scope}[shift={(5*2.4, 0)}]
            \draw circle (\circleRadius);
            \foreach \y in {1,...,11} {
                \fill (90 - \y*360/11:\dotRadius) circle (0.04);
            }
            \fill[opacity=.5, purple_opposite] 
                (90:\dotRadius) -- (90 - 1*360/11:\dotRadius) -- (90 - 2*360/11:\dotRadius) -- (90 - 8*360/11:\dotRadius)-- cycle; 
            \fill[opacity=.5, purple_opposite] 
                (90 - 3*360/11:\dotRadius) -- (90 - 6*360/11:\dotRadius) -- (90 - 10*360/11:\dotRadius)-- cycle; 
        \end{scope}
        
    \end{tikzpicture}
    \caption{noncrossing partitions (first three on the left) \\ and crossing partitions (last three on the right) of $[11]$.}
\end{figure}
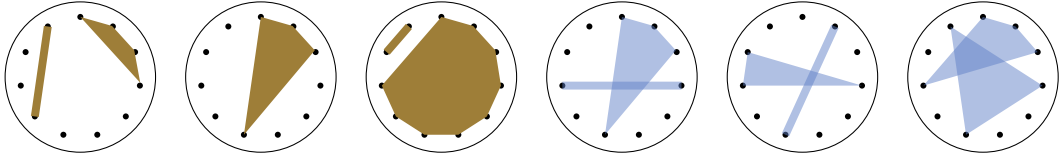

A fundamental property of the set of noncrossing partitions of $[n]$ is that it can be given a natural partial ordering. A \textit{partially ordered set} or \textit{poset}, is a pair $(P,\le)$ where $P$ is a set and ``$\le$" is a binary relation on $P$ satisfying 
\begin{enumerate}
    \item[a.] (reflexivity) $x\le x$,
    \item[b.] (antisymmetry) if $x\le y$ and $y\le x$, then $x=y$, and
    \item[c.] (transitivity) if $x\le y$ and $y\le z$, then $x\le z$,
\end{enumerate}
for all $x,y,z\in P$. $\NC_n$ can be given a natural partial order such that $\pi\le\sigma$ if every block of $\pi$ is contained in a block of $\sigma$, i.e., $\pi$ refines $\sigma$.

Often a poset is represented by a certain graph which can be easier to work with than just using above axioms. For $x,y\in P$, if $x\le y$ and $x\neq y$, we write $x<y$. We say $x$ is \textit{covered} by $y$ (or $y$ \textit{covers} $x$), written either $x\lessdot y$ or $y\gtrdot x$, if $x<y$ and there is no $z\in P$ with $x<z<y$. The \textit{Hasse diagram} of $P$ is the graph with vertices $P$ and an edge from $x$ up to $y$ if $x\lessdot y$. \Cref{Hasse diagram for the noncrossing partition of [3]} is the Hasse diagram for the noncrossing partition of $[3]$. Observe that two noncrossing partitions are at the same \textit{level} in the Hasse diagram if they are incomparable with respect to the partial order defined above; moreover, they partition $[n]$ into the same number of blocks.

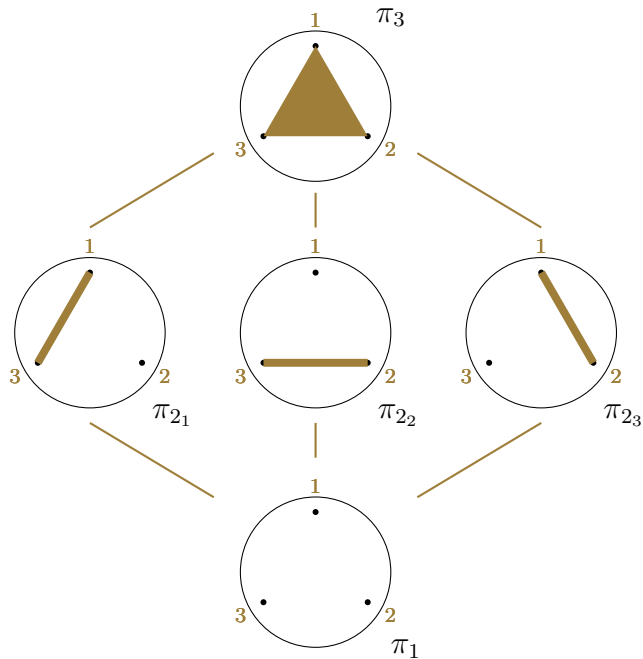
\begin{figure}[h]
    \centering
    \begin{tikzpicture}
        \newcommand{\circleRadius}{1} 
        \newcommand{\dotRadius}{0.8}  

        \begin{scope}[shift={(0, 3.2)}]
            \node (Q1) at (0,0) {
            \begin{tikzpicture}
            \draw circle (\circleRadius);
            \foreach \y in {1,2,3} {
                \fill (90 - \y*360/3:\dotRadius) circle (0.04);
                \node at (90:\dotRadius+.35) {\tiny {\textcolor{wakegold}{1}}};
                \node at (90 - 120:\dotRadius+.35) {\tiny {\textcolor{wakegold}{2}}};
                \node at (90 - 240:\dotRadius+.35) {\tiny {\textcolor{wakegold}{3}}};
            }
            \fill[opacity=1, wakegold] 
                (90:\dotRadius) -- (-30:\dotRadius) -- (210:\dotRadius) -- cycle;
             \end{tikzpicture}};
            \node at (1,1) {$\pi_3$}; 
        \end{scope}

        \begin{scope}[shift={(-3, 0)}]
            \draw circle (\circleRadius);
            \foreach \y in {1,2,3} {
                \fill (90 - \y*360/3:\dotRadius) circle (0.04);
                \node at (90:\dotRadius+.35) {\tiny {\textcolor{wakegold}{1}}};
                \node at (90 - 120:\dotRadius+.35) {\tiny {\textcolor{wakegold}{2}}};
                \node at (90 - 240:\dotRadius+.35) {\tiny {\textcolor{wakegold}{3}}};
            }
            \draw[opacity=1, wakegold, line width=3pt] 
                (90:\dotRadius) -- (210:\dotRadius);
            \node at (1.1, -1.1) {$\pi_{2_1}$}; 
        \end{scope}

        \begin{scope}[shift={(0, 0)}]
            \draw circle (\circleRadius);
            \foreach \y in {1,2,3} {
                \fill (90 - \y*360/3:\dotRadius) circle (0.04);
                \node at (90:\dotRadius+.35) {\tiny {\textcolor{wakegold}{1}}};
                \node at (90 - 120:\dotRadius+.35) {\tiny {\textcolor{wakegold}{2}}};
                \node at (90 - 240:\dotRadius+.35) {\tiny {\textcolor{wakegold}{3}}};
            }
            \draw[opacity=1, wakegold, line width=3pt] 
                (-30:\dotRadius) -- (210:\dotRadius);
            \node at (1.1, -1.1) {$\pi_{2_2}$}; 
        \end{scope}

        \begin{scope}[shift={(3, 0)}]
            \draw circle (\circleRadius);
            \foreach \y in {1,2,3} {
                \fill (90 - \y*360/3:\dotRadius) circle (0.04);
                \node at (90:\dotRadius+.35) {\tiny {\textcolor{wakegold}{1}}};
                \node at (90 - 120:\dotRadius+.35) {\tiny {\textcolor{wakegold}{2}}};
                \node at (90 - 240:\dotRadius+.35) {\tiny {\textcolor{wakegold}{3}}};
            }
            \draw[opacity=1, wakegold, line width=3pt] 
                (90:\dotRadius) -- (-30:\dotRadius);
            \node at (1.1, -1.1) {$\pi_{2_3}$}; 
        \end{scope}

        \begin{scope}[shift={(0, -3)}]
        \node (Q2) at (0,0) {
        \begin{tikzpicture}
            \draw circle (\circleRadius);
            \foreach \y in {1,2,3} 
            {
            \fill (90 - \y*360/3:\dotRadius) circle (0.04);
            \node at (90:\dotRadius+.35) {\tiny {\textcolor{wakegold}{1}}};
                \node at (90 - 120:\dotRadius+.35) {\tiny {\textcolor{wakegold}{2}}};
                \node at (90 - 240:\dotRadius+.35) {\tiny {\textcolor{wakegold}{3}}};
            }
        \end{tikzpicture}};
        \node at (1.2,-1.2) {$\pi_1$}; 
        \end{scope}

        \draw[thick, wakegold] (Q1) -- (-3, 1.4);  
        \draw[thick, wakegold] (Q1) -- (0, 1.4);   
        \draw[thick, wakegold] (Q1) -- (3, 1.4);   
        
        \draw[thick, wakegold] (-3, -1.2) -- (Q2); 
        \draw[thick, wakegold] (0, -1.2) -- (Q2);   
        \draw[thick, wakegold] (3, -1.2) -- (Q2);   
    \end{tikzpicture}
    \caption{Hasse diagram for $\NC_3$.}
    \label{Hasse diagram for the noncrossing partition of [3]}
\end{figure}

A \textit{maximal chain} $\mathfrak{m}$ of $\NC_n$ is a sequence of noncrossing partitions $\pi_1,\pi_2,\dots,\\\pi_n$ such that $\pi_i\le\pi_{i+1}$ for each $i\in[n-1]$, $\pi_{i+1}$ can be obtained from $\pi_{i}$ by merging two blocks of $\pi_{i}$ into a single block. In \Cref{Hasse diagram for the noncrossing partition of [3]}, $\pi_1, \pi_{2_1},\pi_3$ is a maximal chain of $\NC_3$. In 1980, Paul Edelman \cite[Corollary 3.3]{Edelman} showed that the $\NC_{n+1}$ has $(n+1)^{n-1}$ maximal chains, which coincides the number of parking functions of length $n$ [\Cref{cardinality of pf}]. This motivates Stanley's theorem. 
\begin{theorem}[{\cite[Theorem 3.1]{PFandNoncrossing}}]\label{pf and noncrossing}
    There is a bijection between parking functions of length $n$ and maximal chains of noncrossing partitions of $[n+1]$.
\end{theorem}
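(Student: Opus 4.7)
The plan is to define an explicit labeling $\Phi$ of the covers in each maximal chain of $\NC_{n+1}$, verify via Proposition~\ref{parking function criterion} that the resulting sequence is a parking function, and then use the equality $|\PF_n| = (n+1)^{n-1}$ from Theorem~\ref{cardinality of pf} together with Edelman's enumeration of $(n+1)^{n-1}$ maximal chains of $\NC_{n+1}$ to promote injectivity into a bijection.

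First I would define $\Phi$ on a maximal chain $\mathfrak{m}\colon \pi_1 \lessdot \pi_2 \lessdot \cdots \lessdot \pi_{n+1}$, where $\pi_1$ is the all-singletons partition and $\pi_{n+1}$ is the one-block partition. The $i$-th cover $\pi_i \lessdot \pi_{i+1}$ merges two blocks $B_i, B_i'$ of $\pi_i$; adopting the convention $\min B_i < \min B_i'$, set
\[
\alpha_i := \max\{k \in B_i : k < \min B_i'\}, \qquad \Phi(\mathfrak{m}) := (\alpha_1, \ldots, \alpha_n).
\]
Since $1 \le \min B_i \le \alpha_i < \min B_i' \le n+1$, each $\alpha_i$ lies in $[n]$.

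Next I would verify that $\Phi(\mathfrak{m})$ is a parking function using Proposition~\ref{parking function criterion}: it suffices to show $|\{i : \alpha_i \le j\}| \ge j$ for every $j \in [n]$. Let $b_\ell$ count the blocks of $\pi_\ell$ that meet $\{1,\ldots,j+1\}$; then $b_1 = j+1$, $b_{n+1} = 1$, and each difference $b_\ell - b_{\ell+1} \in \{0,1\}$, with value $1$ exactly when both $B_\ell$ and $B_\ell'$ meet $\{1,\ldots,j+1\}$. Telescoping gives exactly $j$ covers with this property; for each such cover $\min B_\ell' \le j+1$ forces $\alpha_\ell \le j$. Hence at least $j$ entries of $\alpha$ are $\le j$, as required.

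The remaining step is injectivity of $\Phi$, and this is where I expect the real difficulty. The label $\alpha_i$ does not determine the cover from $\pi_i$ alone: in $\NC_3$, the covers $\{1\}\cup\{2\}$ and $\{1\}\cup\{3\}$ both produce label $1$, so no greedy local rule inverts $\Phi$. I would instead argue injectivity globally by induction on $n$: suppose $\mathfrak{m} \ne \mathfrak{m}'$ share a label sequence but differ at the first cover; then the noncrossing constraint, together with the fact that the tails $\alpha_2,\ldots,\alpha_n$ must carry each chain from its (distinct) $\pi_2$ to the common $\hat 1$, should force a contradiction by a block-tracking argument similar to the one in Step 2 applied to carefully chosen thresholds $j$. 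Once the first covers are pinned down, the inductive hypothesis handles the truncated chains. With $\Phi$ injective, Theorem~\ref{cardinality of pf} and Edelman's count of $(n+1)^{n-1}$ maximal chains in $\NC_{n+1}$ make any injection between these equinumerous finite sets automatically a bijection.
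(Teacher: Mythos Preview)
Your proposal follows exactly the paper's strategy: the same labeling map $\Phi$, a verification that the label sequence is a parking function, injectivity argued by induction on $n$ (which the paper likewise only sketches and defers to \cite{PFandNoncrossing}), and the final passage to a bijection via Theorem~\ref{cardinality of pf} together with Edelman's count. The one local difference is that your telescoping argument---tracking the number of blocks of $\pi_\ell$ meeting $\{1,\dots,j+1\}$---is a cleaner way to verify the parking-function criterion than the paper's bound on the multiplicity of each label value, but the overall architecture is identical.
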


We describe Stanley's bijection.  First, a maximal chain of $\NC_{n+1}$ can be thought of a series of steps starting with the finest partition $\pi_1=\{1\},\{2\},\dots,\\\{n+1\}$ and then merging two blocks together one at a time at each step until we reach $\pi_{n+1}=\{1,2,\dots,n+1\}$. For example, consider the maximal chain $\mathfrak{m}=\pi_1, \pi_{2_1},\pi_3$ in \Cref{Hasse diagram for the noncrossing partition of [3]}. We merge the blocks $\{1\}$ and $\{3\}$ of $\pi_1$ at step $1$, merge the blocks $\{1,3\}$ and $\{2\}$ of $\pi_{2_1}$ at step $2$, and we reach $\pi_3=\{1,2,3\}$. Stanley's bijection then gives labels $\alpha_1,\dots,\alpha_n$ to each step: Let $A$ and $B$ be the two blocks of $\pi_i$ we are going to merge at step $i$, where $A$ contains the smallest element in the disjoint union $A\cup B$. The label $\alpha_i$ for this step is the largest element in $A$ which is smaller than all elements in $B$. The sequence of labels $\alpha=(\alpha_1,\alpha_2,\dots,\alpha_n)$ forms a parking function of length $n$. 

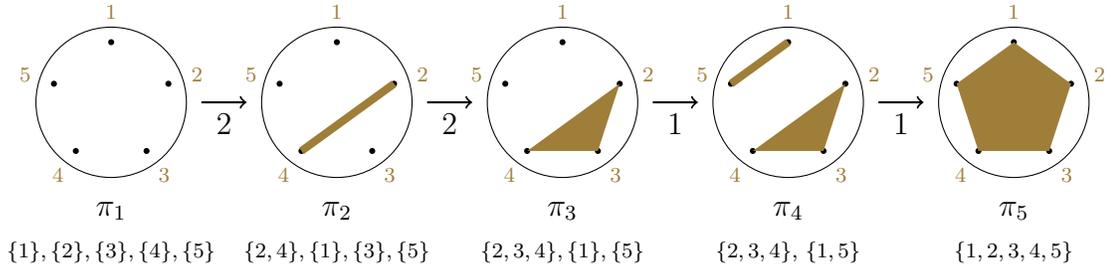
\begin{figure}[h]
    \centering
    \begin{tikzpicture}
        \newcommand{\circleRadius}{1} 
        \newcommand{\dotRadius}{0.8}  

         \begin{scope}[shift={(0, 0)}]
            \draw circle (\circleRadius);
            \foreach [count=\i from 0] \y in {1,2,3,4,5} {
                \fill (90 - \i*360/5:\dotRadius) circle (0.04);
                \node at (90 - \i*360/5:\dotRadius+.4) {\tiny {\textcolor{wakegold}{\y}}};
            }
            \node[below=5pt] at (0, -\circleRadius) {$\pi_1$}; 
            \node[below=20pt] at (0, -\circleRadius) {\tiny $\{1\}, \{2\}, \{3\}, \{4\}, \{5\} $}; 
        \end{scope}

        \begin{scope}[shift={(3, 0)}]
            \draw circle (\circleRadius);
            \foreach [count=\i from 0] \y in {1,2,3,4,5} {
                \fill (90 - \i*360/5:\dotRadius) circle (0.04);
                \node at (90 - \i*360/5:\dotRadius+.4) {\tiny {\textcolor{wakegold}{\y}}};
            }
            \draw[opacity=1, wakegold, line width=3pt] 
            (90 - 1*360/5:\dotRadius) -- (90 - 3*360/5:\dotRadius); 
            \node[below=5pt] at (0, -\circleRadius) {$\pi_2$}; 
            \node[below=20pt] at (0, -\circleRadius) {\tiny $\{2,4\}, \{1\}, \{3\}, \{5\}$};
        \end{scope}

        \begin{scope}[shift={(6, 0)}]
            \draw circle (\circleRadius);
            \foreach [count=\i from 0] \y in {1,2,3,4,5} {
                \fill (90 - \i*360/5:\dotRadius) circle (0.04);
                \node at (90 - \i*360/5:\dotRadius+.4) {\tiny {\textcolor{wakegold}{\y}}};
            }
            \fill[opacity=1, wakegold] 
                (18:\dotRadius) -- (-54:\dotRadius) -- (90 - 3*360/5:\dotRadius) -- cycle;
            \node[below=5pt] at (0, -\circleRadius) {$\pi_3$}; 
            \node[below=20pt] at (0, -\circleRadius) {\tiny $\{2,3,4\}, \{1\}, \{5\}$}; 
        \end{scope}

        \begin{scope}[shift={(9, 0)}]
            \draw circle (\circleRadius);
            \foreach [count=\i from 0] \y in {1,2,3,4,5} {
                \fill (90 - \i*360/5:\dotRadius) circle (0.04);
                \node at (90 - \i*360/5:\dotRadius+.4) {\tiny {\textcolor{wakegold}{\y}}};
            }
            \draw[opacity=1, wakegold, line width=3pt] 
            (90:\dotRadius) -- (90 - 4*360/5:\dotRadius); 

            \fill[opacity=1, wakegold] 
                (18:\dotRadius) -- (-54:\dotRadius) -- (90 - 3*360/5:\dotRadius) -- cycle;
            \node[below=5pt] at (0, -\circleRadius) {$\pi_4$}; 
            \node[below=20pt] at (0, -\circleRadius) {\tiny $\{2,3,4\}$, $\{1,5\}$}; 
        \end{scope}

        \begin{scope}[shift={(12, 0)}]
            \draw circle (\circleRadius);
            \foreach [count=\i from 0] \y in {1,2,3,4,5} {
                \fill (90 - \i*360/5:\dotRadius) circle (0.04);
                \node at (90 - \i*360/5:\dotRadius+.4) {\tiny {\textcolor{wakegold}{\y}}};
            }
            \fill[opacity=1, wakegold] 
                (90:\dotRadius) -- (18:\dotRadius) -- (-54:\dotRadius) -- (-126:\dotRadius) -- (-198:\dotRadius) -- cycle;
            \node[below=5pt] at (0, -\circleRadius) {$\pi_5$}; 
            \node[below=20pt] at (0, -\circleRadius) {\tiny $\{1,2,3,4,5\}$}; 
        \end{scope}

        \draw[->, thick] (1.2, 0) -- (1.8, 0) node[midway, below] {2};
        \draw[->, thick] (4.2, 0) -- (4.8, 0) node[midway, below] {2};
        \draw[->, thick] (7.2, 0) -- (7.8, 0) node[midway, below] {1};
        \draw[->, thick] (10.2, 0) -- (10.8, 0) node[midway, below] {1};

    \end{tikzpicture}
    \caption{One of the maximal chain of $\{1,2,3,4,5\}$ which is associated with parking function $(2,2,1,1)$.}
    \label{maximal chain and pf}
\end{figure}

For example, consider some maximal chain of $\NC_5$ illustrated in \Cref{maximal chain and pf} above. Applying the algorithm, at step $4$, we are about to merge the blocks $\{2,3,4\}$ and $\{1,5\}$ of $\pi_4$. Let $A=\{1,5\}$ and $B=\{2,3,4\}$, where $1\in A$ is the smallest element in $A\cup B=\{1,2,3,4,5\}$. We pick $1$ as the label of this step, since $1\in A$ is the largest element smaller than all element in $B$.

\begin{proof}[Proof sketch of \Cref{pf and noncrossing}]
    Suppose $\pi_{j+1}$ is obtained from $\pi_j$ by merging together two blocks $A$ and $B$ of $\pi_j$, where $\min A<\min B$ and $j\in[n]$. Define
    \[\alpha_j\coloneqq\Lambda(\pi_j,\pi_{j+1})\coloneqq\max\{i\in A:i<\min B\}.\]
    
    First we want to show that $\alpha=(\alpha_1,\dots,\alpha_n)$ is a parking function. Consider the label $\alpha_j=\Lambda(\pi_j,\pi_{j+1})$ for step $j$. Note that $\alpha_j<\min B$, which means that if $\alpha_j$ is the label of some step, the merged block of $\pi_{j+1}$ containing $\alpha_j$ also contains an element $k>\alpha_j$, $k\in[n]$. This ensures that the number of occurrences of $\alpha_j$ in $\alpha$ is at most $n+1-\alpha_j$. If $\alpha_j$ appears exactly $n+1-\alpha_j$ times, it will occupy the $\alpha_j$-th to the $n$-th entries in the weakly increasing rearrangement of $\alpha$. Therefore, the leftmost $\alpha_j$ in the sorted sequence cannot exceed position $\alpha_j$. Thus, $\alpha$ is a parking function.

    To show uniqueness, let $\alpha_1$ and $\alpha_2$ be two sequences associated with maximal chains $\mathfrak{m}_1$ and $\mathfrak{m}_2$ of $\NC_{n+1}$, respectively. We induct on $n$ to show that if $\alpha_1=\alpha_2$, then $\mathfrak{m}_1=\mathfrak{m}_2$. For more details, see \cite[Theorem 3.1]{PFandNoncrossing}.

    Conversely, since the $\NC_{n+1}$ has $(n+1)^{n-1}$ maximal chains, and there are $(n+1)^{n-1}$ parking functions of length $n$, we conclude that every parking function of length $n$ occurs exactly once among $\alpha$.
\end{proof}

\section{Parking Function Polytopes}\label{section 3}

Following the definitions in \textit{Lectures on Polytope} by G\"unter Ziegler \cite{ziegler1995lectures}, a point set $K\subseteq \R^n$ is \textit{convex} if with any two points $\vb x,\vb y\in K$, it contains a straight line segment between them. The \textit{convex hull} of $K$ is the smallest convex set containing $K$. A (\textit{convex}) \textit{polytope} is the convex hull of a finite point set. Formally, the polytope formed by $K=\{\vb v_1,\vb v_2,\dots,\vb v_k\}\subseteq \R^n$ is
\[\conv(K)=\left\{\vb x\in\R^n: \vb x = \sum_{i=1}^k \lambda_i \vb v_i, \lambda_i \ge 0, \sum_{i=1}^k\lambda_i=1\right\}, \]
and we call such $\vb x$ is a \textit{convex combination} of $K$. 

Parking functions of length $n$ can be viewed as elements in $\R^n$. For example, the three parking functions of length $2$: $(1,1)$, $(1,2)$, and $(2,1)$, can be treated as points in $\R^2$, and the polytope formed by these three points is a triangle, see \Cref{parking function polytope PF_2}. In general, we define parking function polytope $\mathcal{PF}_n\in\R^n$ to be the convex hull of all parking functions of length $n$, i.e.,
\[\mathcal{PF}_n = \conv\left(\left\{ \alpha\in\R^n: \alpha \text{ is a parking function of length }n\right\}\right).\]
\Cref{parking function polytope PF_3} is the polytope in $\R^3$ formed by all parking functions of length $3$.
\begin{figure}[h]
    \captionsetup{justification=centerlast}
    \centering
    \begin{subfigure}[b]{0.45\linewidth}
        \includegraphics[width=\linewidth]{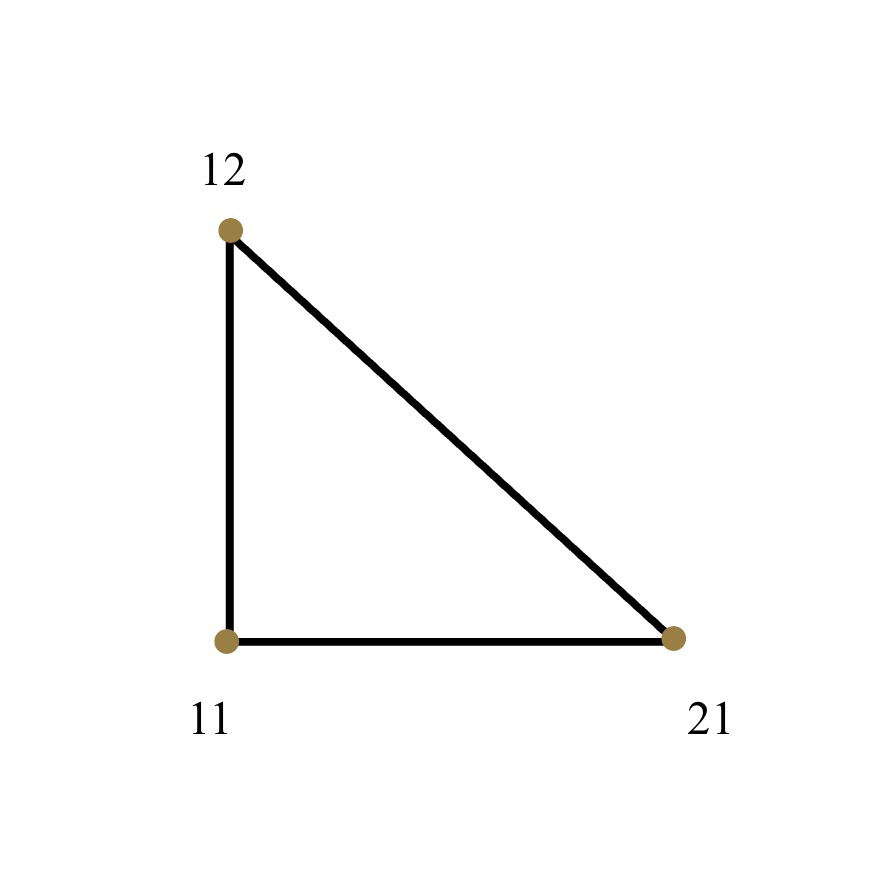}
        \caption{Parking function polytope $\mathcal{PF}_2$.}
        \label{parking function polytope PF_2}
    \end{subfigure}
    \hfill
    \begin{subfigure}[b]{0.45\linewidth}
        \includegraphics[width=\linewidth]{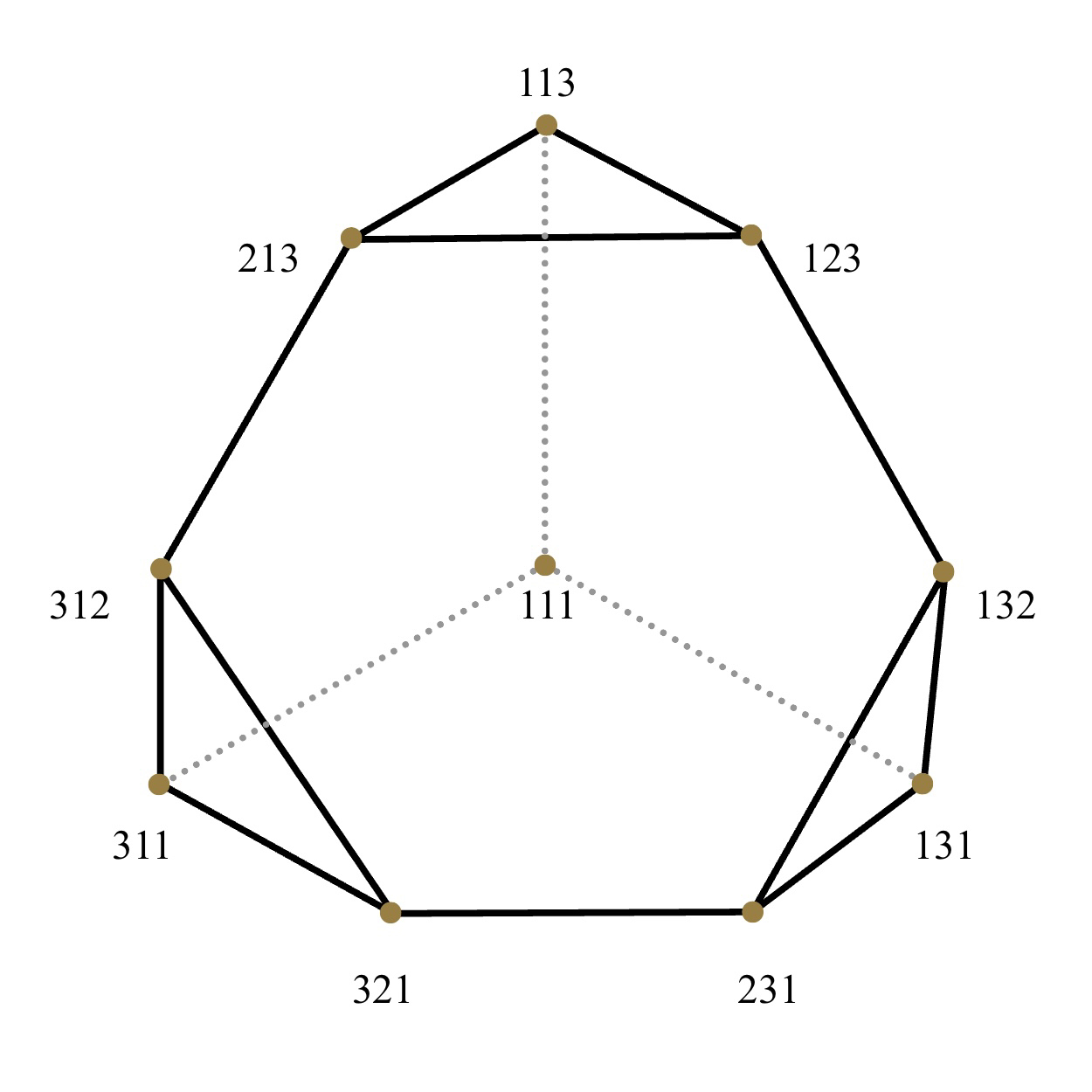}
        \caption{Parking function polytope $\mathcal{PF}_3$.}
        \label{parking function polytope PF_3}
    \end{subfigure}
    \caption{}
\end{figure}

We call $F$ a face of a polytope $\mathcal{P}$ if $F=\mathcal{P}\cap\{\vb x:\vb c\cdot \vb x=d\}$
for some $\vb c\in\R^n$ and $d\in\R$, such that for all $\vb x\in\mathcal{P}$, $\vb c\cdot \vb x\le d$, where the dot $\cdot$ means the dot product. We call a face a \textit{vertex} if it has dimension $0$, an \textit{edge} if it has dimension $1$, and a \textit{facet} if it has dimension $n-1$ given that $\mathcal{P}$ has dimension $n$. Understanding the number of faces of a polytope is crucial for analyzing its geometric structure. Each parking function of length $n$ lies on the boundary of $\mathcal{PF}_n$, meaning it is part of some face of $\mathcal{PF}_n$, whether that be a vertex, edge, facet, or higher-dimensional face.
For $\mathcal{PF}_3$, it has $7$ facets: one hexagonal facet, three pentagonal facets, and three triangular facets. The hexagonal facet, known as permutahedron $\Pi_3$, is particularly notable. 

According to Ziegler, it was first investigated by Schoute in 1911 \cite{Schoute1911}. The \textit{permutahedron} $\Pi_n\in \R^n$ is an $(n-1)$-dimensional polytope defined as the convex hull of all vectors that are obtained by permuting the coordinates of the vector $(1,2,\dots,n)$. Each of its vertices $(x_1,x_2,\dots,x_n)$ can be identified as a permutation in $\S_n$ via the map $x_i\mapsto i$, and two vertices are adjacent if and only if their corresponding permutations differ by an adjacent transposition.

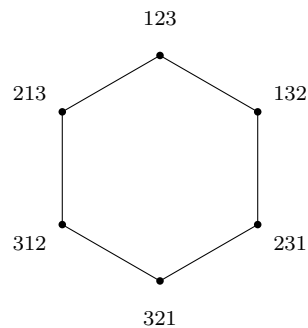
\begin{figure}[h]
    \centering
    \begin{tikzpicture}
        \newcommand{\radius}{1.5}

        \foreach \i/\label in {1/213, 2/312, 3/321, 4/231, 5/132, 6/123} {
            \fill (90 + \i*60:\radius) circle (0.05);
            \node at (90 + \i*60:\radius + 0.5) {\tiny \label};
        }

        \foreach \i [evaluate=\i as \next using {int(mod(\i,6)+1)}] in {1,...,6} {
            \draw (90 + \i*60:\radius) -- (90 + \next*60:\radius);
        }
    \end{tikzpicture}
    \caption{Permutahedron $\Pi_3$.}
\end{figure}

We can use a Schlegel diagram to visualize $\mathcal{PF}_n$ in a lower-dimensional space. Loosely speaking, a \textit{Schlegel diagram} projects a polytope in $\R^n$ into $\R^{n-1}$ by projecting from a point just outside the polytope, preserving the structure of each faces. \Cref{Schelegel diagram PF3 only vertices} is the Schlegel diagram of $\mathcal{PF}_3$. We observe that not all parking functions are vertices of parking function polytopes. For example, the $10$ vertices of $\mathcal{PF}_3$ include six permutations of $(1,2,3)$, three permutations of $(1,1,3)$, and $(1,1,1)$. The remaining six parking functions of length $3$, consisting of the permutations of $(1,1,2)$ and $(1,2,2)$, either lie on the edges or facets of $\mathcal{PF}_3$, as shown in \Cref{Schelegel diagram PF3 not only vertices}. For instance, $(1,1,2)$ lies on the edge between $(1,1,1)$ and $(1,1,3)$, forming a linear sequence $(1,1,1)$, $(1,1,2)$, and $(1,1,3)$. 

\begin{figure}[h]
    \captionsetup{justification=centerlast}
    \centering
    \begin{subfigure}[b]{0.45\linewidth}
        \includegraphics[width=\linewidth]{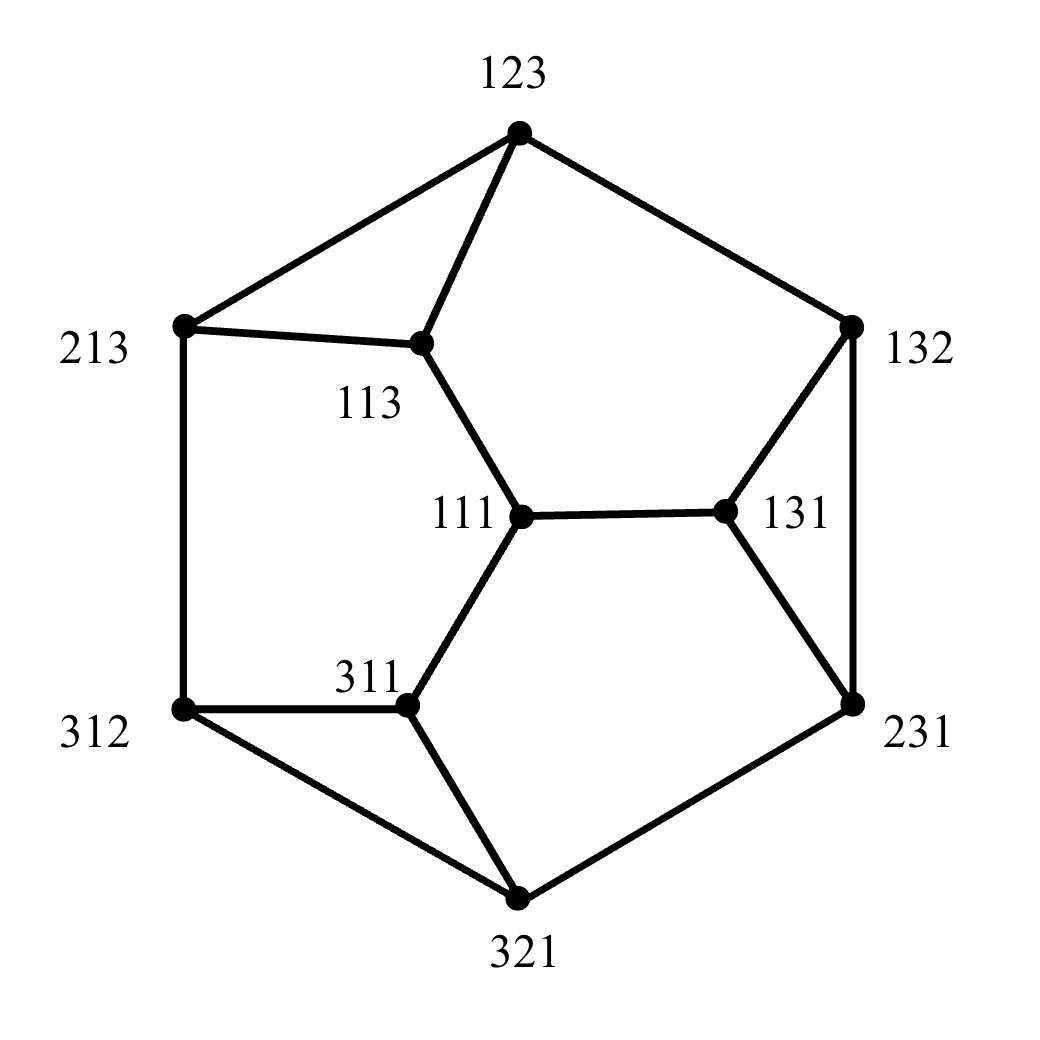}
        \caption{}
        \label{Schelegel diagram PF3 only vertices}
    \end{subfigure}
    \hfill
    \begin{subfigure}[b]{0.45\linewidth}
        \includegraphics[width=\linewidth]{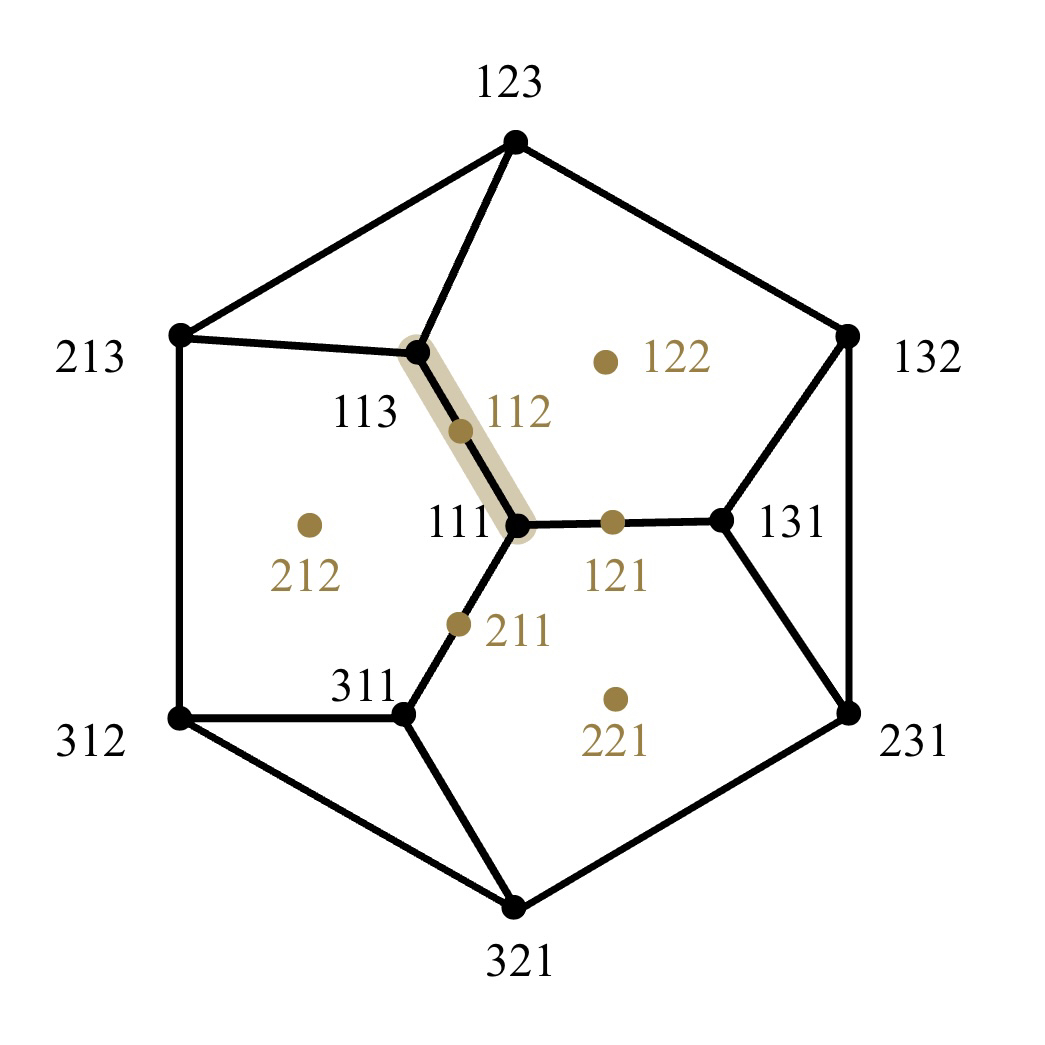}
        \caption{}
        \label{Schelegel diagram PF3 not only vertices}
    \end{subfigure}
    \caption{Schlegel diagram of $\mathcal{PF}_3$}
\end{figure}

These observations lead to two important general principles regarding the vertices of the parking function polytope:
\begin{enumerate}
    \item[a.] If a parking function is not a vertex of the polytope, increasing any entry greater than $1$ by $1$ yields another parking function (In other wards, it is possible to take a ``one-step" forward).
    \item[b.] If a parking function is a vertex of the polytope, then all of its rearrangements are also vertices.
\end{enumerate}

The below proposition by Stanley formalizes these ideas.

\begin{prop}[{\cite[pg. 2]{ConvesHull2021}}]
    For each positive integer $n$, $\vb v=(v_1,\dots,v_n)$ is a vertex of $\mathcal{PF}_n$ if and only if it a rearrangement of 
    \[(\underbrace{1,\dots,1}_{k\text{ times }}, k+1, k+2, \dots, n),\]
    for some $1\le k\le n$.
\end{prop}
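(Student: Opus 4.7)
The plan is to prove both implications separately after a reformulation. Writing $\beta_1\le\cdots\le\beta_n$ for the weakly increasing rearrangement of $\vb v$, the condition that $\vb v$ is a rearrangement of $(\underbrace{1,\dots,1}_k,k+1,\dots,n)$ for some $k\in[n]$ is equivalent to the statement $\beta_i\in\{1,i\}$ for every $i\in[n]$ (using $\beta_i\le i$ from \Cref{parking function criterion}). Call such a parking function \emph{staircase}; the task becomes showing $\vb v$ is a vertex of $\mathcal{PF}_n$ if and only if $\vb v$ is staircase.

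For the direction ``vertex $\Rightarrow$ staircase,'' I would argue the contrapositive. Suppose $\beta_i=m$ for some $i$ and some $m$ with $2\le m\le i-1$, and pick any position $p$ of $\vb v$ with $v_p=m$. I then claim $\vb v^\pm:=\vb v\pm\vb e_p$ are both parking functions distinct from $\vb v$, so $\vb v=\tfrac12(\vb v^++\vb v^-)$ is not extreme and hence not a vertex. The sorted form of $\vb v^-$ is obtained from $\beta$ by lowering one entry $m\ge 2$ to $m-1$, which only decreases sorted entries and so preserves $\beta_j\le j$. The sorted form of $\vb v^+$ differs from $\beta$ only at the last occurrence $\ell$ of the value $m$, where the entry becomes $m+1$; since position $i$ has $\beta_i=m$, we have $\ell\ge i>m$, so $m+1\le\ell$, again preserving the criterion of \Cref{parking function criterion}.

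For the direction ``staircase $\Rightarrow$ vertex,'' I would construct a linear functional $\vb c\in\R^n$ whose unique maximizer on $\mathcal{PF}_n$ is $\vb v$, so that $\{\vb v\}$ is a $0$-dimensional face. Let $S=\{i:v_i=1\}$ (so $|S|=k$) and index $T=[n]\setminus S$ as $i_1,\dots,i_{n-k}$ with $v_{i_\ell}=k+\ell$. Set $c_i=-N$ for $i\in S$ and $c_{i_\ell}=\ell$ for $\ell\in[n-k]$, with $N$ a large positive constant. For any parking function $\vb u$: if $u_i>1$ for some $i\in S$ then $\sum_{i\in S}u_i\ge k+1$, and for $N$ larger than a bound depending only on $n$ the penalty $-N\sum_{i\in S}u_i$ forces $\vb c\cdot\vb u<\vb c\cdot\vb v$. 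Otherwise $u|_S\equiv 1$, and \Cref{parking function criterion} forces the sorted values $u_{(1)}\le\cdots\le u_{(n-k)}$ of $u|_T$ to satisfy $u_{(\ell)}\le k+\ell$. The rearrangement inequality applied with the strictly increasing weights $1,2,\dots,n-k$ then yields
\[
\sum_{i\in T}c_iu_i=\sum_{\ell=1}^{n-k}\ell\,u_{i_\ell}\le\sum_{\ell=1}^{n-k}\ell\,u_{(\ell)}\le\sum_{\ell=1}^{n-k}\ell(k+\ell)=\sum_{i\in T}c_iv_i,
\]
with equality throughout iff $u_{i_\ell}=u_{(\ell)}=k+\ell=v_{i_\ell}$ for every $\ell$, i.e., iff $\vb u=\vb v$.

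The main obstacle is the sufficiency direction: the functional must simultaneously pin down the positions of the $1$'s and the arrangement of the distinct values $k+1,\dots,n$ on $T$. The large negative coefficients on $S$ handle the former, while the rearrangement inequality with strictly increasing weights $1,\dots,n-k$, combined with the PF upper bound $u_{(\ell)}\le k+\ell$ being saturated exactly at $\vb v$, handles the latter; both inequalities in the display must be tight, and each becomes an equality only at $\vb v$ because the saturating values $k+1,\dots,n$ are distinct. The necessity direction is by comparison a clean single-coordinate perturbation.
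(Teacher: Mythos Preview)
Your proof is correct. The necessity direction (``not staircase $\Rightarrow$ not vertex'') is essentially the paper's argument: both of you write the non-staircase $\vb v$ as the midpoint of $\vb v\pm\vb e_p$, though you justify more carefully via the sorted criterion that both perturbations remain parking functions.

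The sufficiency direction (``staircase $\Rightarrow$ vertex'') is where you genuinely diverge. The paper argues by contradiction: if $\alpha$ were not a vertex it would lie on a segment between two other parking functions $\beta,\gamma$, and since some coordinate satisfies $\beta_i<\alpha_i<\gamma_i$ with $\alpha_i\ge 2$, the paper concludes $\gamma$ cannot be a parking function because ``increasing any entry of $\alpha$'' breaks the parking property. This last step is loose --- $\gamma$ may differ from $\alpha$ in many coordinates, not just the $i$-th, so the one-coordinate obstruction does not immediately rule it out. Your approach sidesteps this entirely by exhibiting an explicit supporting functional: large negative weights on $S$ force $u|_S\equiv 1$, and then the rearrangement inequality together with the tight bounds $u_{(\ell)}\le k+\ell$ (which you correctly derive from $\tilde\beta_{k+\ell}\le k+\ell$) pins down $u|_T$ uniquely. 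This is more constructive, makes the face structure transparent, and is airtight where the paper's contradiction argument is not. The trade-off is that the paper's intended argument, once patched, is shorter and uses nothing beyond the definition of extreme point, while yours imports the rearrangement inequality and a two-case analysis.
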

\begin{proof}
    First, for $k\in [n]$, observe that any permutation of the sequence
    \[\alpha^{(k)}=(\underbrace{1,\dots,1}_{k\text{ times }}, k+1, k+2, \dots, n)\]
    is a parking function of length $n$. Moreover, increasing any entry greater than $1$ in $\alpha^{(k)}$ by $1$ results in a sequence that is no longer a parking function. For example,
    \[(\underbrace{1,\dots,1}_{k\text{ times }}, k+2, k+2, \dots, n)\]
    is not a parking function.

    Suppose $\alpha=(\alpha_1,\alpha_2,\cdots,\alpha_n)$ is a parking function that is not a rearrangement of any $\alpha^{(k)}$. Then there exists an index $i$ such that for $\alpha_i>1$, we increase $\alpha_i$ by one yielding another parking function
    \[\alpha'=(\alpha_1,\dots,\alpha_{i-1}, \alpha_i+1, \alpha_{i+1},\dots,\alpha_n).\]
    This implies that $\alpha$ lies on the line segment connecting $\alpha'$ and
    \[\alpha''=(\alpha_1,\dots,\alpha_{i-1}, \alpha_i-1, \alpha_{i+1},\dots,\alpha_n).\]
    Therefore, $\alpha$ can be expressed as a convex combination
    \[\alpha=\frac{1}{2}\alpha'+\frac{1}{2}\alpha''.\]
    Thus, $\alpha$ is not a vertex of $\mathcal{PF}_n$.

    Conversely, suppose $\alpha$ is a permutation of $\alpha^{(k)}$ for some $k$. Suppose for contradiction that $\alpha$ is not a vertex of $\mathcal{PF}_n$. Then it can be written as a convex combination of two other parking functions $\beta,\gamma$, which are vertices of $\mathcal{PF}_n$, say
    \[\alpha=\lambda\beta+(1-\lambda)\gamma,\]
    for some $0<\lambda<1$. This implies that there exists an $i$ such that $\alpha_i\neq\beta_i$ and $\alpha_i\neq\gamma_i$. Without loss of generality, $\beta_i<\alpha_i<\gamma_i$ since $0<\lambda<1$. Each entry of a parking function must be a positive integer, so $\alpha_i\ge 2$. However, increasing any entry of $\alpha$ will not result in a parking function, so $\gamma$ is not a parking function, which is a contradiction.
\end{proof}

To determine the number of vertices of $\mathcal{PF}_n$, we count the distinct permutations of parking function
\[(\underbrace{1,\dots,1}_{k\text{ times }}, k+1, k+2, \dots, n)\]
for each $k\in[n]$. The number of such permutation is given by the multinomial coefficient
\[{n\choose {k,\underbrace{1,\dots,1}_{\text{$n-k$ times}}}}=\frac{n!}{k!}.\]
Summing over $k$ from $1$ to $n$, the total number of vertices is
\[n!\left(\frac{1}{1!}+\frac{1}{2!}+\cdots+\frac{1}{n!}\right).\]
If the reader is interested in the number of edges and faces of higher dimensions of $\mathcal{PF}_n$, see \cite[section 2,3]{ConvesHull2021}. 

We end this paper by presenting one more example of parking function polytopes.

\begin{example}
    The Schlegel diagram of $\mathcal{PF}_4$ is shown in \Cref{fig 7-a}. $\mathcal{PF}_4$ has only one three-dimensional face, which is the convex hull of $24$ vertices, namely, the $24$ permutations of $(1,2,3,4)$. The convex hull of these permutations is exactly the permutahedron $\Pi_4$ (as shown in \Cref{fig 7-b}). Similarly, we can find $8$ two-dimensional faces, each of which are the convex hulls of $6$ vertices. These two-dimensional faces are $\Pi_3$.

    \begin{figure}[h]
    \captionsetup{justification=centerlast} 
    \centering
    \begin{subfigure}[b]{0.45\linewidth}
        \centering
        \includegraphics[width=\linewidth]{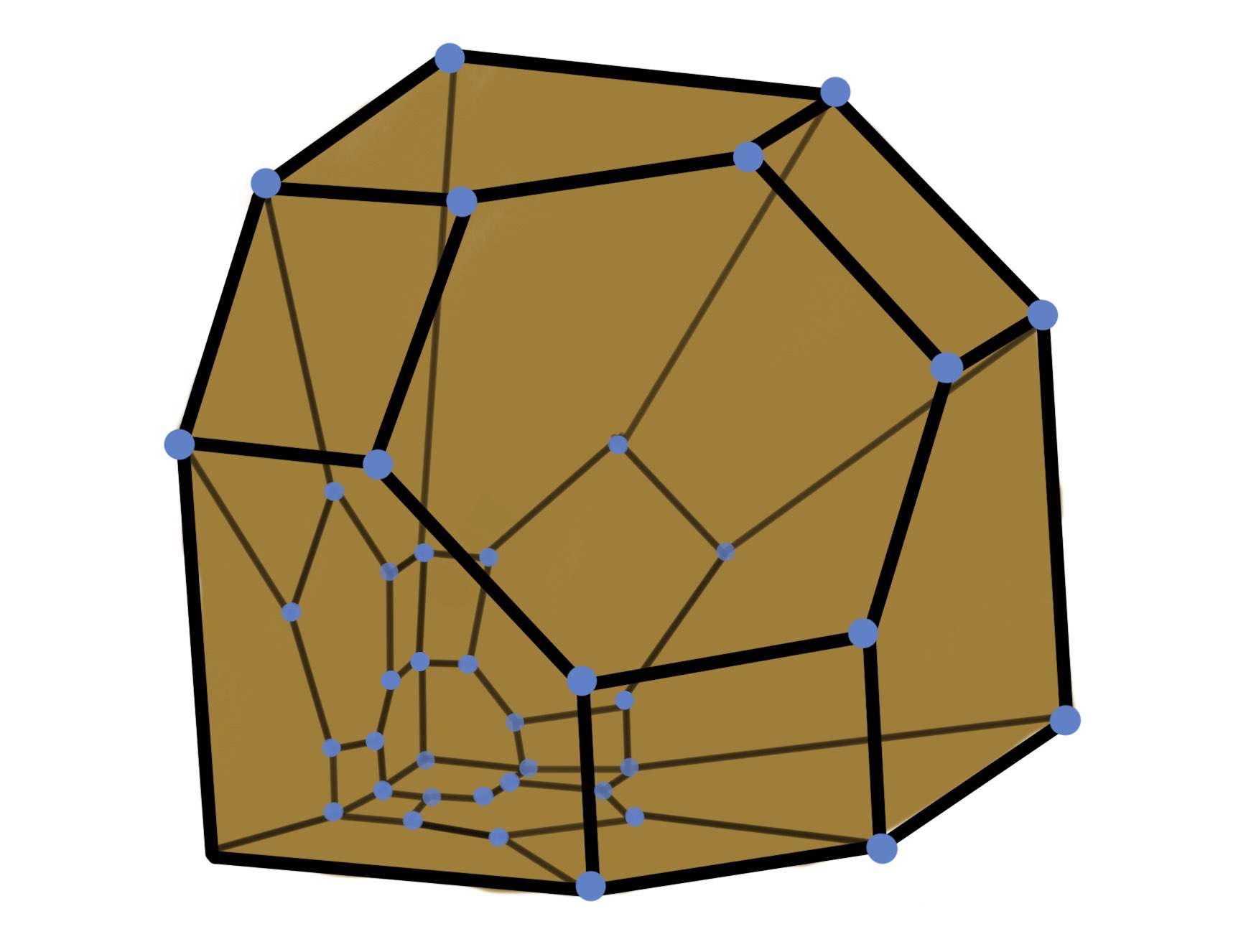}
        \caption{Schlegel diagram of $\mathcal{PF}_4$}
        \label{fig 7-a}
    \end{subfigure}
    \hfill
    \begin{subfigure}[b]{0.45\linewidth}
        \centering
        \includegraphics[width=\linewidth]{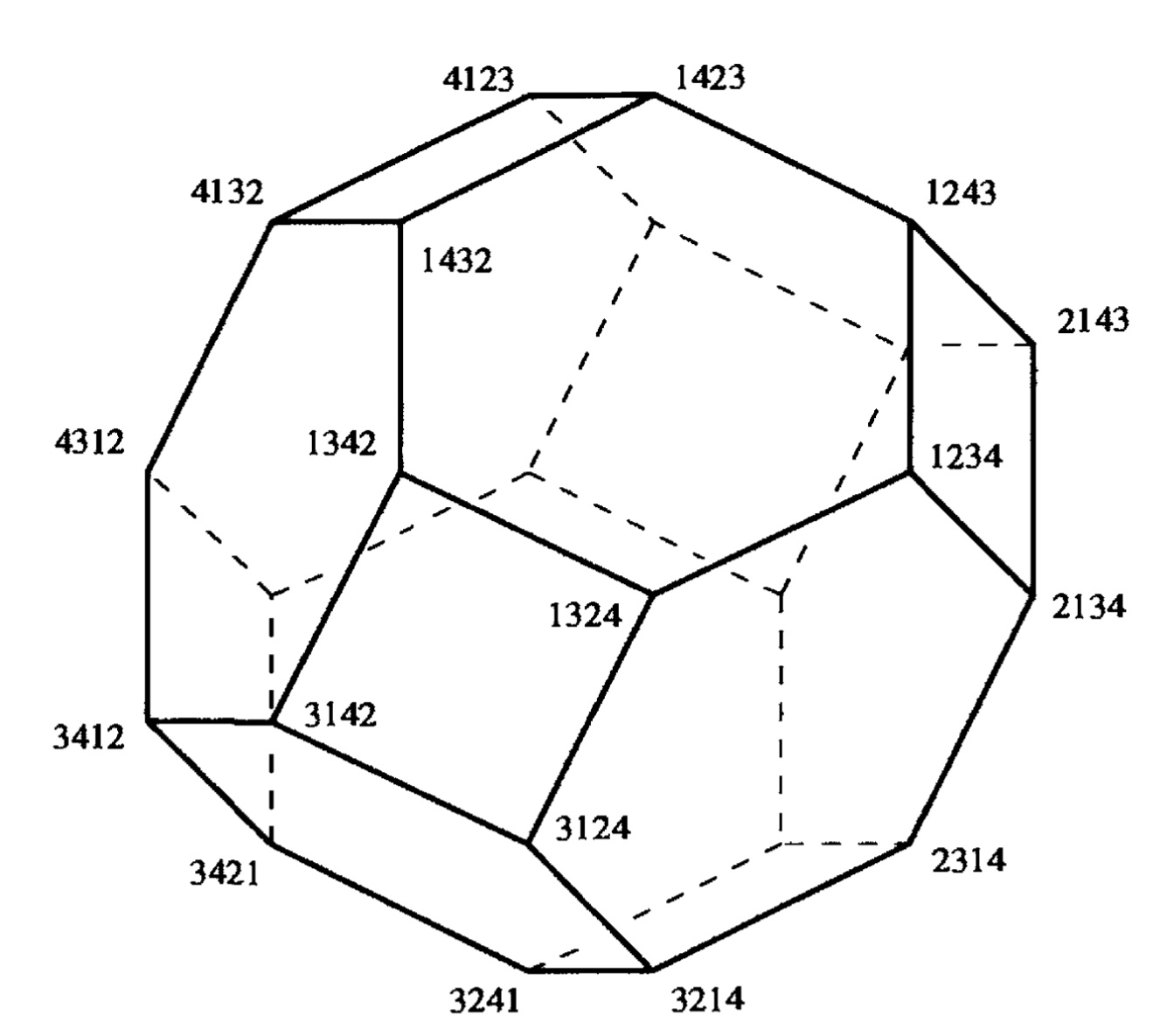}
        \caption{Permutahedron $\Pi_4$ \cite{ziegler1995lectures}}
        \label{fig 7-b}
    \end{subfigure}
    \caption{}
    \end{figure}
    
    If the reader is interested in a general overview about the face structure of $\mathcal{PF}_n$, see \cite[Section 2.1]{GeneralizedPFPolytope2023}.
\end{example}

\end{document}